\documentclass[12pt,a4paper]{amsart}

\usepackage[T1]{fontenc}
\usepackage{amssymb,amsmath,latexsym,amsthm,paralist,a4,mathrsfs,dsfont}
\usepackage{tikz-cd}
\usepackage{xspace}
\usepackage{txfonts}
\usepackage{enumitem}
\usetikzlibrary{decorations.markings,decorations.pathmorphing}
\usepackage{microtype}
\makeatletter
\tikzcdset{
  open/.code     = {\tikzcdset{hook, circled};},
  closed/.code   = {\tikzcdset{hook, slashed};},
  open'/.code    = {\tikzcdset{hook', circled};},
  closed'/.code  = {\tikzcdset{hook', slashed};},
 circled/.code  = {\tikzcdset{markwith = {\draw (0,0) circle (.375ex);}};},
 slashed/.code  = {\tikzcdset{markwith = {\draw[-] (-.4ex,-.4ex) -- (.4ex,.4ex);}};},
  markwith/.code ={
    \pgfutil@ifundefined
    {tikz@library@decorations.markings@loaded}
    {\pgfutil@packageerror{tikz-cd}{You need to say 
      \string\usetikzlibrary{decorations.markings} to use arrows with markings}{}}{}
    \pgfkeysalso{/tikz/postaction = {
      /tikz/decorate,
      /tikz/decoration={markings, mark = at position 0.5 with {#1}}}
    }
  },
}
\makeatother

\usepackage[headings]{fullpage}
\usetikzlibrary{babel}
\usetikzlibrary{positioning,calc}
\usepackage{varioref}
\usepackage[pdfpagelabels, pdftex]{hyperref}
\hypersetup{
  pdftitle={valuative criteria},
  pdfauthor={Magdalena Bauer},
  pdfsubject={},
  pdfkeywords={},
  colorlinks=true,    
  linkcolor=red,     
  citecolor=blue,     
  filecolor=blue,      
  urlcolor=blue,       
  breaklinks=true,
  bookmarksopen=true,
  bookmarksnumbered=true,
  pdfpagemode=UseOutlines,
  plainpages=false,
  unicode=true
  }
\usepackage[capitalise]{cleveref}
\usepackage{changes}
\makeatletter
\newcommand{\zerounderset}[3][\mathord]{%
  #1{\vtop{
    \let\\\cr
    \baselineskip\z@skip\lineskip.25ex
    \ialign{\hidewidth$##$\hidewidth\crcr
      \omit$#3$\cr
      #2\crcr
    }%
  }}%
}
\makeatother

\newtheoremstyle{thm123}
  {}
  {}
  {\sl }
  {}
  {\bf}
  {.}
  {.5em}
  {}
\theoremstyle{thm123}

\newtheorem{theorem}{Theorem}[section]
\newtheorem*{theorem*}{Theorem}

\newtheorem{proposition}[theorem]{Proposition}
\newtheorem{lemma}[theorem]{Lemma}
\newtheorem*{lemma*}{Lemma}

\newtheoremstyle{def123}
  {}
  {}
  {\rm }
  {}
  {\bf}
  {.}
  {.5em}
  {}
\theoremstyle{def123}
\newtheorem*{example*}{Example}

\newtheorem{remark}[theorem]{Remark}

\newtheorem{definition}[theorem]{Definition}

\DeclareMathOperator{\Spv}{\mathrm{Spv}}
\DeclareMathOperator{\Spa}{\mathrm{Spa}}

\DeclareMathOperator{\fr}{\mathrm{Frac}}
\DeclareMathOperator{\m}{\mathfrak{m}}
\newcommand{\ups}{\mathrm{\upsilon}}

\newcommand{\ba}{\begin{align*}}
\newcommand{\cO}{{\mathscr O}}
\newcommand{\ea}{\end{align*}}
\newcommand{\com}{\widehat}
\newcommand{\ti}{\tilde}
\newcommand{\p}{^\prime}
\newcommand{\pp}{^{\prime\prime}}

\begin{document}

\hfuzz=4pt
\title{Valuative Criteria for adic spaces}
\author{Magdalena Bauer}
\email{mbauer@math.uni-frankfurt.de}
\date{\today}
\thanks{The author acknowledges support by Deutsche Forschungsgemeinschaft (DFG) through the Collaborative Research Centre TRR 326 "Geometry and Arithmetic of Uniformized Structures", project number 444845124.}
\address{Institut für Mathematik, Goethe--Universität Frankfurt, Robert-Mayer-Str. 6--8, 60325 Frankfurt am Main, Germany}

\begin{abstract}
We establish valuative criteria for separatedness and (partial) properness for morphisms of adic spaces, which account for both vertical and horizontal specializations. 
The lifting test diagrams used in Huber’s analytic valuative criteria are naturally adapted to vertical specializations. 
In the non-analytic setting, one must additionally include lifting conditions for horizontal specializations.
Combining these horizontal criteria with Huber’s vertical criteria yields unified valuative characterizations of separatedness, universal specialization, and (partial) properness for arbitrary adic spaces.
\end{abstract}

\maketitle

\tableofcontents

\section{Introduction}
Valuative criteria serve as an indispensable tool for reducing global geometric properties of morphisms, most notably separatedness and properness, to concrete lifting problems with respect to valuation rings. 
The approach goes back to Grothendieck’s classical valuative criterion for schemes, which characterizes properness in terms of the existence and uniqueness of lifts in diagrams involving spectra of valuation rings.

In the context of adic geometry, Huber established an analogue for analytic adic spaces. 
Given a morphism $X\rightarrow Y$ of such spaces, 
an affinoid field $(k,k^+)$ and a valuation ring $A\subseteq k^+$, his criterion tests properness through the existence and uniqueness of a lifting morphism in diagrams of the form \begin{center}
    \begin{tikzcd}
            {\Spa(k,k^+)} \arrow[d] \arrow[r]     & X \arrow[d] \\
        {\Spa(k, A)} \arrow[r]\arrow[ru, dotted] & Y . 
    \end{tikzcd}
\end{center} 
The image of the lower morphism $\Spa{(k,A)}\rightarrow Y$ consists precisely of vertical specializations of the points in the image of the upper morphism $\Spa{(k,k^+)}\rightarrow X$. 
Consequently, these test diagrams govern only vertical specializations.  
In the analytic setting, where all specializations are vertical (i.e., they preserve the support of the valuation), this lifting property fully captures the relevant topological behavior of the morphism.  

However, for general (non-analytic) adic spaces, the situation is more nuanced due to the occurrence of horizontal specializations, which change the support of the valuation. 
While Huber's established criteria remain necessary in this broader setting, they no longer suffice, as they fail to detect horizontal specializations. 
A complete characterization of morphisms for arbitrary adic spaces therefore requires a complementary approach that tracks horizontal specializations.

In this paper, we develop the missing horizontal valuative framework and thereby establish valuative criteria reflecting the full specialization structure of adic spaces.
To capture horizontal specializations, we introduce horizontal valuated valuation rings together with horizontal centers.
They allow us to state a complementary lifting problem: 
for a morphism $X\rightarrow Y$, one considers lifts in diagrams of the form \begin{center}
    \begin{tikzcd}
            {\Spa(k,k^+)} \arrow[d] \arrow[r]     & X \arrow[d] \\
        {\Spa(B, k^+)} \arrow[r]\arrow[ru, dotted] & Y ,
    \end{tikzcd}\end{center} 
where $B$ is a valuation subring of $k$ containing $k^+$. 
The image of the lower morphism now parametrizes horizontal specializations, in direct analogy to the vertical case.
    
The construction is guided by a structural result due to Huber \cite{Hu1993}: every specialization in an adic space factors as a vertical specialization followed by a horizontal specialization. 
This decomposition is the organizing principle of the paper. 
We first state the vertical criteria, then prove their horizontal analogues, and finally combine the two to obtain unified valuative criteria for arbitrary adic spaces.
These unified results consequently characterize separatedness, the universal specialization property, and (partial) properness in full generality.
 
The paper is organized as follows. Section \ref{sectionVvrings} recalls valuated valuation rings following \cite{Hu1993} and introduces their horizontal variants, which function as local testing objects for our criteria.
Section \ref{sectionverticalVc} is devoted to a slight generalization of the classical valuative criteria \cite[Section 1.3] {Hu1996} to all adic spaces by restricting exclusively to vertical properties.
Section \ref{sectionhorizontalVc} contains the new component, namely horizontal valuative criteria for adic spaces. 
Finally, Section \ref{sectionunifiedVc} combines both perspectives to gain unified criteria that account for any specialization in adic spaces. 

\section{Valuated valuation rings}\label{sectionVvrings}
We begin by recalling Huber’s notion of valuated valuation rings.
In the analytic setting all specializations are vertical, and Huber's original test objects are thus what we will call \emph{vertical} valuated valuation rings. 
To accommodate horizontal specializations, we introduce their respective counterparts.

\begin{definition}(Huber {\cite[Lemma 1.5.2 and p. 238]{Hu1993}})\begin{enumerate}
    \item A \emph{valuated local ring} is a pair $(A,A^+)$ consisting of a local ring $A$ equipped with a valuation $\ups$ whose support is the maximal ideal of $A$, and where $A^+$ is the subring of elements of value at most 1. 
    \item A \emph{local ring homomorphism of valuated local rings} $(A,A^+)$ and $(B,B^+)$ is a local ring homomorphism $f:A\rightarrow B$ such that $f(A^+)\subseteq B^+$ and the induced ring homomorphism $A^+\rightarrow B^+$ is local. 
    \item A \emph{valuated valuation ring} is a valuated local ring $(A,A^+)$ where $A$ is a valuation ring. 
    \end{enumerate}
\end{definition}
To relate these rings to adic spaces, we fix a point of an adic space and consider valuated valuation rings inside its residue field.

\begin{definition}\label{defVvandcenter}(Huber {\cite[Definition 3.11.8]{Hu1993}})
Let $x$ be a point of an adic space $X$.
\begin{enumerate}
    \item We  call $(x,A,A^+)$ a \emph{valuated valuation ring on $X$ with support $x$} if $(A,A^+)$ is a valuated valuation ring such that $A\subseteq k(x), A^+\subseteq k(x)^+$ and $\fr(A)=k(x)$.
    \item  A \emph{center} of this valuated valuation ring on $X$ is a specialization $y$ of $x$ such that the canonical map $\cO_{X,y}\rightarrow k(x)$ factors through $A$ and the induced homomorphism $\cO_{X,y}\rightarrow A$ is a local homomorphism of valuated local rings $(\cO_{X,y}, \cO^+_{X,y})$ and $(A,A^+)$.
\end{enumerate}
\end{definition}

The next condition on the existence and uniqueness of centers on affinoids plays a central role in the proofs below.

\begin{lemma}\label{uniquecenteraffinoid}(Huber {\cite[Lemma 3.11.9]{Hu1993}})
Let $X$ be an affinoid adic space and let $(x,A,A^+)$ be a valuated valuation ring on $X$. 
Then $(x,A,A^+)$ has a center on $X$ if and only if, under the canonical map $\cO_X(X)\rightarrow k(x)$, the image of $\cO_X(X)$ is contained in $A$ and the image of $\cO^+_X(X)$ is contained in $A^+$. 
If such a center exists, it is uniquely determined.
\end{lemma}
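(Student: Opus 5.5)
We write $X=\Spa(R,R^+)$ with $R=\cO_X(X)$, $R^+=\cO^+_X(X)$, and denote by $\phi\colon R\to k(x)$ the canonical map and by $\ups$ the valuation on $k(x)$ attached to $A^+$. The plan is to translate the existence of a center into a statement about valuations of $R$, and then read off both directions of the equivalence and the uniqueness.

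\textbf{Necessity.} Suppose $y$ is a center. The map $\phi$ factors as $R\to\cO_{X,y}\to A\subseteq k(x)$, so $\phi(R)\subseteq A$. Similarly $R^+$ maps into $\cO^+_{X,y}$. By the definition of a local homomorphism of valuated local rings, $\cO^+_{X,y}$ maps into $A^+$, hence $\phi(R^+)\subseteq A^+$.

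\textbf{Sufficiency.} Assume $\phi(R)\subseteq A$ and $\phi(R^+)\subseteq A^+$, and let $\m_A$ be the maximal ideal of $A$. Define a valuation $w$ on $R$ by composing $R\to A\to A/\m_A$ with the valuation on $A/\m_A$ induced by $A^+$. Concretely, $A^+/\m_A$ is a valuation ring of the field $A/\m_A$, since $\m_A$ is the support of $\ups$ restricted to $A$, so $\m_A\subseteq A^+$. We then verify that $w$ is a point of $\Spa(R,R^+)$:
\begin{itemize}
\item $w\le 1$ on $R^+$, because $\phi(R^+)\subseteq A^+$.
\item $w$ is continuous. Here we use that $x$ itself is continuous and that $w$ is obtained from $\ups_x$ by a vertical generization (coarsening via $A$) followed by a horizontal specialization; equivalently, $w$ is a specialization of $x$ in $\Spv(R)$.
\end{itemize}
I expect continuity to be the main obstacle. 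Taking $R$ to be a Huber ring with ideal of definition $I$, one must show that $w(i)$ is cofinal to $0$ for $i\in I$, in the sense required by Huber's criterion: $w(i)<1$ for topologically nilpotent $i$, together with the condition on the value group. For this I would use $\phi(I)\subseteq A^+$ and the fact that topologically nilpotent elements have $\ups_x<1$, so that $\phi(i)\in\m_{A^+}$. Then $w(i)<1$ holds whenever $\phi(i)\notin\m_A$. Afterwards I would invoke Huber's characterization of specializations in $\Spa$: a specialization of a point in $\Spv$ that lies in $\Spa$, and is of the form $\ups_{x}|_{B}$ obtained from a vertical–horizontal decomposition, is again continuous (\cite{Hu1993}).

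\textbf{Center condition.} Set $y:=w$, and check that $y$ specializes $x$ and that $\cO_{X,y}\to k(x)$ factors through $A$ as a local homomorphism. For rational neighborhoods $U=R(\tfrac{f_1,\dots,f_n}{g})$ of $y$ we have $w(g)\neq0$, so $\phi(g)\in A^\times$. Hence $\cO_X(U)$, generated by $R$ and the fractions $f_i/g$, maps into $A$, and likewise $\cO^+_X(U)$ maps into $A^+$ by integrality, since $A^+$ is integrally closed. Passing to the colimit gives $\cO_{X,y}\to A$. The preimage of $\m_A$ is the support ideal of $y$, which is the maximal ideal of $\cO_{X,y}$, so the map is local. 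The analogous statement for the $+$-rings follows from the valuation $w$ being induced by $A^+/\m_A$.

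\textbf{Uniqueness.} Any center $y$ determines its valuation on $R$: locality forces the support of $y$ to be $\phi^{-1}(\m_A)$, and locality on the $+$-level forces the valuation ring of $k(y)$ to be the image of $A^+$. Hence $y=w$, since points of $\Spa$ are determined by their valuations on $R$ up to equivalence.
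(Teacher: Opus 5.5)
Your overall architecture is the right one: define $w$ through $R\to A\to A/\m_A$, show $w\in X$, verify the center condition, and get uniqueness from locality. Necessity is fine. Uniqueness is also fine: locality on both levels forces $\mathrm{supp}(v_y)=\phi^{-1}(\m_A)$ and $k(y)^+=k(y)\cap(A^+/\m_A)$, hence $v_y|_R=w$.

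\textbf{The continuity of $w$ is not proved.} You flag this step as the obstacle but do not close it. Checking $w(i)<1$ for topologically nilpotent $i$ is the easy half. It holds for all such $i$, since $\phi(i)\in\m_{k(x)^+}\subseteq\m_{A^+}$, not only when $\phi(i)\notin\m_A$. The cofinality condition is what can genuinely fail for valuations on $R$, and you leave it open. Your appeal to Huber is circular as phrased (``a specialization \dots{} that lies in $\Spa$''). The decomposition you describe is also not the right one: a vertical generization need not be continuous (the trivial valuation with the support of an analytic point is not), and it does not lead to $w$. The correct picture is the following. The valuation $u:=\ups\circ\phi$ is a vertical \emph{specialization} of $x$, because $A^+\subseteq k(x)^+$. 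Then $w=u|_H$ is the horizontal specialization along the convex subgroup $H=\Gamma_u\cap\ups(A^\times)$. The hypothesis $\phi(R)\subseteq A$ is exactly what gives $c\Gamma_u\subseteq H$: indeed $u(f)\notin H$ forces $u(f)<h$ for all $h\in H$. Continuity is then a direct check. For $\gamma$ in the value group of $w$,
\[
\{f: w(f)<\gamma\}=\{f: u(f)<\gamma\}\supseteq\{f: v_x(f)<\bar\gamma\},
\]
where $\bar\gamma$ is the image of $\gamma$ under the quotient $\Gamma_u\to\Gamma_x$, whose kernel is convex. An additive subgroup containing an open subgroup is open. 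The same decomposition is what actually shows that $w$ specializes $x$.

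\textbf{The center condition ignores completion.} For $U=R(\frac{f_1,\dots,f_n}{g})$, the ring $\cO_X(U)=R\langle\frac{f_1}{g},\dots,\frac{f_n}{g}\rangle$ is a completion, not the ring generated by $R$ and the $f_i/g$. So $\phi(g)\in A^\times$ alone does not give a map into $A$. You need an approximation argument. Write $h=h_0+\epsilon$ with $h_0$ in the image of $R[1/g]$ and $\epsilon$ topologically nilpotent, with $\epsilon\in\cO^+_X(U)$ if $h$ is. Since $x\in U$, $v_x$ is continuous on $\cO_X(U)$, so the image of $\epsilon$ lies in $\m_{k(x)^+}\subseteq\m_{A^+}$. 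The term $h_0$ is handled by $\phi(g)\in A^\times$ together with $w(f_i)\le w(g)$, i.e.\ $\phi(f_i)/\phi(g)\in A^+$. Your $^+$-integrality step silently uses this last fact. Likewise, locality requires that $v_y$ agree on all of $\cO_{X,y}$, not just on $R$, with the valuation obtained from $\cO_{X,y}\to A\to A/\m_A$. This follows because the same continuity argument shows that this valuation on $\cO_X(U)$ is a point of $\Spa(\cO_X(U),\cO^+_X(U))$ restricting to $w$. Since $\Spa(\cO_X(U),\cO^+_X(U))\to\Spa(R,R^+)$ is injective, that point is $y$.
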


We collect several basic facts about valuated valuation rings that will be used repeatedly.

\begin{remark}(Huber {\cite{Hu1993}})\label{remarkVvrings}
If $A\subseteq B$ are valuation rings with the same fraction field, there is a unique valuation on $B$ for which $B^+=A$ and $(B,A)$ is a valuated valuation ring. 
For a valuated valuation ring $(A,A^+)$, $A^+$ is a valuation ring as well with $\fr(A^+)=\fr(A)$. 
Let $f\colon X\rightarrow Y$ be a morphism of adic spaces, $x$ a point of $X$ and $(x,A,A^+)$ a valuated valuation ring on $X$. 
Then $(f(x),A\cap k(f(x)), A^+ \cap k(f(x)))$ is a valuated valuation ring on $Y$ which we denote by $f_\ups(A)$. 
If $z$ is a center of $(x,A,A^+)$ on $X$, it follows that $f(z)$ is a center of $f_\ups(A)$ on $Y$.
Conversely, if there exists a valuated valuation ring $(B,B^+)$ in $k(f(x))$, we can choose a valuated valuation ring $(A,A^+)$ on $X$ such that $f_\ups(A)=(B,B^+)$.
\end{remark}

The following proposition provides the structural rationale for our vertical-horizontal approach. 
We write $x \rightsquigarrow y$ if $y$ is a specialization of $x$, i.e., $y \in \overline{\{x\}}$.
 
\begin{proposition}\label{propfactorizationspecial}
Let $x,y$ be points of an affinoid adic space $\Spa(A,A^+)$ such that $x \rightsquigarrow y$. 
Then there exists a point $z$ in $\Spa(A,A^+)$ such that $$
x \rightsquigarrow z \rightsquigarrow y,
$$ where $x\rightsquigarrow z$ is a vertical specialization and $z\rightsquigarrow y $ is a horizontal specialization.
   
\begin{proof}
By {\cite[Corollary 1.1.17]{Hu1993}}, the specialization factors as $x\rightsquigarrow z \rightsquigarrow y$ in $\Spv(A)$.
Since $x\in \Spa(A,A^+)$ and vertical specializations preserve the defining conditions of the adic space, it follows that $z\in \Spa(A,A^+)$.
\end{proof}
\end{proposition}

Specializations can be represented by valuated valuation rings in a precise sense. 
In particular, motivated by the preceding factorization we distinguish two types of valuated valuation rings depending on whether the center is a vertical or a horizontal specialization of its support.

\begin{lemma}(Huber {\cite[Lemma 3.11.10]{Hu1993}})\label{linkspecialVvrings}
Let $x,y$ be points of an adic space $X$ such that $x\rightsquigarrow y$. 
Then there is a valuated valuation ring with support $x$ and center $y$.
More precisely, this valuated valuation ring is of the form $(x,k(x),A^+)$ with $A^+\subseteq k(x)^+$ (resp. of the form $(x,A,k(x)^+)$ with $k(x)^+\subseteq A \subseteq k(x)$) with center $y$ if and only if $y$ is a vertical (resp. horizontal) specialization of $x$.
\end{lemma}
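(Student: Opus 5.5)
Since centers are defined locally (only $\cO_{X,y}$ and the residue field $k(x)$ enter) and specializations may be tested in an open affinoid neighbourhood of $y$, which then contains $x$, we may assume $X=\Spa(R,R^+)$ is affinoid. We then use \cref{uniquecenteraffinoid}: a valuated valuation ring $(x,A,A^+)$ has a center if and only if the image of $R$ in $k(x)$ lies in $A$ and the image of $R^+$ lies in $A^+$, and this center is unique. So the task is to identify the unique center of a suitable valuated valuation ring with $y$. For this we use the description of points of $\Spa(R,R^+)$ as pairs of a prime ideal and a valuation ring of the residue field. The center of $(x,A,A^+)$ should be the point whose support is the preimage in $R$ of the maximal ideal $\m_A$, with residue field $A/\m_A$, and whose valuation ring is $A^+/\m_A$. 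With this description, the center $z$ of $(x,A,A^+)$ is a specialization of $x$ realised in $\Spv$ by the composite valuation $R\to A\to A/\m_A$ together with $A^+$.

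\textbf{Vertical case.} Suppose $y$ is a vertical specialization of $x$. Then $\operatorname{supp} y=\operatorname{supp} x$ and $k(y)=k(x)$. The valuation ring of $y$ is a valuation ring $k(y)^+\subseteq k(x)^+$. Put $A=k(x)$ and $A^+=k(y)^+$. By \cref{remarkVvrings}, $(k(x),A^+)$ is a valuated valuation ring, since its maximal ideal is $0$, which is the support of the valuation. The conditions of \cref{uniquecenteraffinoid} hold: $R$ maps into $k(x)$ trivially, and $R^+$ maps into $k(y)^+$ because $y\in\Spa(R,R^+)$. By the description above the center is $y$. Conversely, if $(x,k(x),A^+)$ has center $y$, then the support of $y$ is the preimage of $0$, which is $\operatorname{supp}x$. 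So the specialization is vertical.

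\textbf{Horizontal case.} Suppose $y$ is a horizontal specialization of $x$. By Huber's theory of $\Spv$ \cite[Section 1.1]{Hu1993}, $y$ is obtained from $x$ by a valuation ring $B$ with $k(x)^+\subseteq B\subseteq k(x)$ and $\m_B\cap R=\operatorname{supp}y$. More precisely, $y$ is the valuation $R\to B/\m_B$ composed with $k(x)^+/\m_B$, i.e.\ the restriction of $x$ to a convex subgroup. Put $A=B$ and $A^+=k(x)^+$; by \cref{remarkVvrings} this is a valuated valuation ring. The image of $R$ lies in $B$, because $R$ lands in the valuation ring of the coarsening, i.e.\ $x(f)$ is bounded by the convex subgroup. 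The image of $R^+$ lies in $k(x)^+$. Hence a center exists, and by the description it equals $y$. Conversely, if $(x,A,k(x)^+)$ has center $y$, then $y$ is by construction a horizontal specialization of $x$.

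\textbf{General case.} For the first claim, \cref{propfactorizationspecial} gives $x\rightsquigarrow z\rightsquigarrow y$ with the first step vertical and the second horizontal. Combine $A^+=k(z)^+\subseteq k(x)$ with $A=B$ from the horizontal step; equivalently, take $A^+$ to be the preimage in $k(x)^+$ of the valuation ring of $y$ in $B/\m_B$. Then apply \cref{uniquecenteraffinoid} as before.

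The main obstacle is the identification "center $=y$". This needs care with the correspondence between specializations in $\Spv$ and pairs of valuation rings, namely Huber's Proposition 1.1.15 and Corollary 1.1.17. In particular the horizontal step requires that $R$ maps into $B$, i.e.\ that $y$ really is the horizontal specialization attached to the convex subgroup determined by $B$. I would handle this by writing $y=x|_{c\Gamma}$ explicitly and checking the two inclusions directly.
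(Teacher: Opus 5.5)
The paper does not prove this lemma; it quotes Huber. So your argument is measured against the statement itself. Your strategy is sensible: pass to $X=\Spa(R,R^+)$, build $(A,A^+)$ satisfying the criterion of Lemma~\ref{uniquecenteraffinoid}, and identify the unique center with $y$. The horizontal case works: take for $B$ the coarsening of $k(x)^+$ given by the convex hull of $H$, where $y=x|_H$ with $H\supseteq c\Gamma_x$.

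The vertical and general cases, however, rest on a false step. You assert that a vertical specialization $x\rightsquigarrow y$ has $k(y)=k(x)$. Here $k(\cdot)$ is the residue field of the local ring $\cO_{X,\cdot}$, not $\operatorname{Frac}(R/\operatorname{supp})$. A vertical specialization only gives an embedding $k(y)\hookrightarrow k(x)$, which is all the paper itself uses in the proof of Lemma~\ref{generalishorizontalandvertical}.

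This embedding can be strict. On the closed unit disc over $\mathbb{C}_p$, let $x$ be the Gauss point and $y$ its rank-two specialization with $r<|T-a|_y<1$ for all real $r<1$. The germ at $x$ of $\sum_{n\geq 1}p^{\lfloor\sqrt n\rfloor}(T-a)^{-n}$ lies in $k(x)$, since it is defined on $\{|T-a|\geq 1\}$. It does not lie in $k(y)$: every germ at $y$ is defined on a neighbourhood containing an annulus $\{r<|T-a|<1\}$, so its expansion in negative powers of $T-a$ converges for $|T-a|>r$ with some $r<1$. In such a situation $k(y)^+$ is a valuation ring of $k(y)$ only. Hence $(x,k(x),k(y)^+)$ is not a valuated valuation ring on $X$, because the definition requires $\fr(A^+)=k(x)$. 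Your pair $(B,k(z)^+)$ in the general case has the same defect.

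The missing ingredient is an extension of valuation rings.
\begin{itemize}
\item Vertical case: since $k(y)^+\subseteq k(x)^+\cap k(y)$, the ring $k(y)^+$ corresponds to a valuation ring of the residue field of $k(x)^+\cap k(y)$. Extend it (Chevalley) to a valuation ring of the residue field of $k(x)^+$, and let $A^+$ be its preimage in $k(x)^+$. Then $A^+\subseteq k(x)^+$ is a valuation ring of $k(x)$ with $A^+\cap k(y)=k(y)^+$. So $R^+$ lands in $A^+$, and the center supplied by Lemma~\ref{uniquecenteraffinoid} is $y$.
\item General case: do this for $x\rightsquigarrow z$. Then let $A$ be the coarsening of $A^+$ attached to the convex subgroup generated by $H$ in the value group of $A^+$, where $y=z|_H$.
\end{itemize}

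Two smaller points.
\begin{itemize}
\item The identification of the center with the valuation $R\to A\to A/\m_A$ (your ``should be'') does follow from the definition, and you should state why. Locality of $\cO^+_{X,z}\to A^+$ means $(A^+/\m_A)\cap k(z)$ dominates $k(z)^+$. It also contains $k(z)^+$, so the two valuation rings are equal.
\item Your ``equivalently'' in the general case is wrong as written, because $k(x)^+$ need not lie in $B$. For example, take $x$ trivial on $k(T)$, $z$ the $T$-adic valuation, and $y$ trivial with support $(T)$. This gives $B=k[T]_{(T)}\subsetneq k(x)^+=k(T)$.
\end{itemize}
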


This observation naturally leads to the following definition.

\begin{definition}\label{defhorizontalverticalVV}
Let $x$ be a point of an adic space $X$. 
A \emph{vertical valuated valuation ring} with support $x$ is a valuated valuation ring of the form $(x,k(x),A^+)$ with $A^+\subseteq k(x)^+$.
A \emph{horizontal valuated valuation ring} with support $x$ is a valuated valuation ring of the form $(x,A,k(x)^+)$ with $k(x)^+\subseteq A\subseteq k(x)$. 
\end{definition}

The test objects considered in the analytic criteria \cite[Definition 1.3.5]{Hu1996} are only required to admit vertical specializations as centers, which is why they are chosen to be vertical valuated valuation rings. 
For horizontal valuated valuation rings, a center can be described explicitly as follows.

\begin{definition}\label{defhorizontalcenter}
Let $(x,A,k(x)^+)$ be a horizontal valuated valuation ring on an adic space $X$, and let $\m_A$ be the maximal ideal of $A$. 
A \emph{horizontal center} of $(x,A,k(x)^+)$ on $X$ is a horizontal specialization $y$ of $x$ such that $k(y)$ embeds into $A/\m_A$ and $k(y)^+$ is the intersection of $k(y)$ with the image of $k(x)^+$ in $A/\m_A$. 
\end{definition}

To justify treating the vertical and horizontal criteria separately, we have to demonstrate that they can be recombined to recover the general case. 
The key step is provided by the following statement, which shows that any valuated valuation ring decomposes into a vertical and a horizontal component.

\begin{lemma}\label{generalishorizontalandvertical}
Let $(x,A,A^+)$ be a valuated valuation ring with center $y$ on an affinoid adic space $X$. 
Then $(x,k(x),A^+)$ is a vertical valuated valuation ring with center $z$, and $y$ is a horizontal specialization of $z$. 
Moreover, $(z,A\cap k(z),k(z)^+)$ is a horizontal valuated valuation ring with center $y$.

\begin{proof}
The first assertion is \cite[Lemma 2.4 iv)]{solodov2025relativeverticalcompactificationweakly}, so it remains to show that $(z,A\cap k(z),k(z)^+)$ is a horizontal valuated valuation ring with center $y$. 
Since the specialization $z$ of $x$ is vertical, we obtain the map $k(z)\hookrightarrow k(x)$ and thus $k(z)^+\subseteq A \cap k(z) \subseteq k(z)$. 
Hence, $(z,A\cap k(z), k(z)^+)$ is a valuated valuation ring by Remark \ref{remarkVvrings}, and it is horizontal by Definition \ref{defhorizontalverticalVV}. 
Moreover, it has a center by Definition \ref{uniquecenteraffinoid} as the maps $\cO_X(X)\hookrightarrow A$ and $\cO_X^+(X)\hookrightarrow A^+$ factor through $A\cap k(z)$ and $k(z)^+$, respectively.
Since both the composition and the second morphism of $\cO_{X,y}\rightarrow A \cap k(z) \rightarrow A$ are local, it follows that the first morphism is local as well. 
Lastly, the composition $\cO^+_{X,y}\rightarrow \cO^+_{X,z}\rightarrow k(z)^+$ is local by \cite[Corollary 3.11.7]{Hu1993}, because $y$ is a horizontal specialization of $z$. 
Therefore, according to Definition \ref{defVvandcenter} $y$ is the center of $(z,A \cap k(z),k(z)^+)$. 
\end{proof}
\end{lemma} 

We conclude this section by establishing the terminology for the relevant properties of morphisms.
This slightly refines the classical notions presented in \cite[Section 1.3]{Hu1996} by decoupling them into vertical and horizontal counterparts.

\begin{definition}
Let $f\colon X\rightarrow Y$ be a morphism of adic spaces. We call $f$ \begin{enumerate}[label=(\roman*)]
    \item \emph{vertically}  (resp. \emph{horizontally}) \emph{specializing at a point $x\in X$}, if every vertical (resp. horizontal) specialization $y\p$ of $f(x)$ lifts to a vertical (resp. horizontal) specialization $x\p$ of $x$ with $f(x\p)=y\p$. 
    \item \emph{universally vertically} (resp. \emph{horizontally}) \emph{specializing} at $x\in X$, if $f$ is locally of weakly finite type and for every adic morphism $Y\p\rightarrow Y$ of adic spaces and every point $x\p$ of $X\times_Y Y\p$ that lies over $x$, the projection $X\times_Y Y\p\rightarrow Y\p$ is vertically (resp. horizontally) specializing at $x\p$. 
    \item \emph{(universally) vertically} (resp. \emph{horizontally}) \emph{specializing}, if it has this property at every point of $X$. 
    \item \emph{vertically} (resp. \emph{horizontally}) \emph{separated}, if $f$ is quasi-separated, locally of weakly finite type and for every vertical (resp. horizontal) valuated valuation ring and every point $y$ of $Y$ there is at most one center $z$ on $X$ such that $f(z)=y$. 
    \item \emph{vertically} (resp. \emph{horizontally}) \emph{partially proper}, if $f$ is locally of $^+$weakly finite type, vertically (resp. horizontally) separated and universally vertically (resp. horizontally) specializing. 
    \item \emph{vertically} (resp. \emph{horizontally}) \emph{proper}, if $f$ is quasi-compact and vertically (resp. horizontally) partially proper. 
    \end{enumerate}
\end{definition}

\section{Vertical valuative criteria}\label{sectionverticalVc}
In the analytic setting, all specializations are vertical, and Huber’s valuative criteria already provide a complete description. 
We slightly generalize his results to arbitrary adic spaces by focusing strictly on vertical properties.
These statements are termed \emph{vertical} valuative criteria to emphasize their role as one half of the general picture. 

\begin{proposition}[Vertical valuative criteria]\label{verticalVc}
Let $f\colon X\rightarrow Y$ be a morphism of adic spaces which is locally of weakly finite type and quasi-separated (resp. locally of weakly finite type, resp. (locally) of $^+$weakly finite type and quasi-separated). 
The following are equivalent. \begin{enumerate}
        \item $f$ is vertically separated (resp. universally vertically specializing, resp. vertically (partially) proper),
        \item for every affinoid field $(k,k^+)$ and every valuation ring $A\subseteq k^+$ in a commutative diagram \begin{equation*}
            \begin{tikzcd}
         {\Spa(k,k^+)} \arrow[d] \arrow[r]     & X \arrow[d] \\
         {\Spa(k,A)} \arrow[r] \arrow[ru, dotted, "p"] & Y,          
           \end{tikzcd}\end{equation*}
           there is at most one (resp. there exists a, resp. there exists exactly one) morphism $p$ making the diagram commute. 
    \end{enumerate}
    
\begin{proof}
The proofs of \cite[Proposition 1.3.7 and 1.3.8]{Hu1996} carry over verbatim to the general setting, if one inserts \emph{vertical} everywhere. 
\end{proof}
\end{proposition}

\section{Horizontal valuative criteria}\label{sectionhorizontalVc}
This section contains the remaining component necessary to establish the main result of this paper: valuative criteria for horizontal separatedness and horizontal (partial) properness. 
Since horizontal specializations are trivial in analytic spaces, these criteria specifically target non-analytic phenomena where Huber's original criteria are no longer exhaustive. 
We begin with two essential preparatory observations.

\begin{remark}\label{remarklocalhuberpair}
Let $(k,k^+)$ be an affinoid field and $B$ a subring satisfying $k^+\subseteq B \subseteq k$ with maximal ideal $\m_B$.
As recalled from \cite[Lemma 6.8]{Hubner2021AdicTameSite}, the adic space $\Spa{(B,k^+)}$ possesses a unique closed point, which corresponds to the valuation ring $k^+/\m_B$, and every point specializes to it.
This property will be used repeatedly when interpreting centers of horizontal valuated valuation rings. 
\end{remark}

\begin{remark}\label{remarkwlogchangecomdiag}
Let $X\rightarrow Y$ be a morphism of adic spaces and $x$ a point of $X$. 
Consider a commutative diagram \begin{center}\begin{tikzcd}
        S \arrow[r]\arrow[d] &X \arrow[d] \\
        T \arrow[r] &Y
    \end{tikzcd}
    \end{center} 
in which $S=\Spa{(k(x),k(x)^+)}$ and $T=\Spa{(B,k(x)^+)}$ with $k(x)^+\subseteq B \subseteq k(x)$. 
Note that in this context the uniqueness (resp. existence) of a lift is equivalent to the same property for diagrams in which $S=\Spa{(k,k^+)}$ and $T=\Spa{(B',k^+)}$ with $(k,k^+)$ an affinoid field and $k^+\subseteq B'\subseteq k$, where $x$ is the image of the closed point under $S\rightarrow X$.
Hence, horizontal valuative lifting problems may be checked in residue fields without loss of generality. 
\end{remark}

The following theorem provides a horizontal analogue to Proposition \ref{verticalVc}. It characterizes horizontal properties of morphisms of adic spaces by corresponding lifting properties for horizontal valuated valuation rings.

\begin{theorem}[Horizontal valuative criteria]\label{horizontalVc}
Let $f\colon X\rightarrow Y$ be a morphism of adic spaces which is locally of weakly finite type and quasi-separated (resp. locally of weakly finite type, resp. (locally) of $^+$weakly finite type and quasi-separated). 
The following are equivalent. \begin{enumerate}
        \item $f$ is horizontally separated (resp. universally horizontally specializing, resp. horizontally (partially) proper),
        \item for every horizontal valuated valuation ring with support $x$ on $X$ and for every center $y$ of its pushforward to $Y$, there is at most one (resp. there exists a, resp. there exists a unique) center $z$ on $X$ such that $f(z)=y$, 
        \item for every affinoid field $(k,k^+)$ and every ring $k^+\subseteq B\subseteq k$ in a commutative diagram \begin{equation*}
            \begin{tikzcd}
         {\Spa(k,k^+)} \arrow[d] \arrow[r]     & X \arrow[d, "f"] \\
         {\Spa(B, k^+)} \arrow[r] \arrow[ru, dotted, "p"] & Y,          
           \end{tikzcd}\end{equation*}
           there is at most one (resp. there exists a, resp. there exists exactly one) morphism $p$ making the diagram commute. 
    \end{enumerate}

\begin{proof}
\textit{(1) $\iff$ (2):} 
By definition, the assertion for horizontal (partial) properness follows once the cases of horizontal separatedness and the universal horizontal specialization property are established. 
   
The case of horizontal separatedness is immediate from the definition. 
    
For the other claim, suppose first that $f$ is universally horizontally specializing. We show that every horizontal valuated valuation ring with a center on $Y$ has a center on $X$ above it.
Let $(x,B,k(x)^+)$ be a horizontal valuated valuation ring on $X$ whose image has center $y$ on $Y$. 
This setup induces a commutative diagram
    \begin{equation*}
      \begin{tikzcd}
        {\Spa(k(x),k(x)^+)} \arrow[d, "f\p"] \arrow[r, "g\p"]     & X \arrow[d, "f"] \\
        {\Spa(B, k(x)^+)} \arrow[r, "g"] & Y . 
    \end{tikzcd}
    \end{equation*}
Let $x\pp$ be the closed point of $\Spa(k(x),k(x)^+)$.  
Since its residue field $k(x\pp)$ agrees with the completed residue field $\com{k(x)}$ of $x$, we obtain the valuated valuation ring $(x\pp,\com{B},\com{k(x)^+})$ on $\Spa(k(x),k(x)^+)$.
Let $\ti{x}=f\p(x\pp)$. 
Note that the residue field of $\ti{x}$ is again $\com{k(x)}$. 
Now, the horizontal valuated valuation ring $(\ti{x}, \com{B}, \com{k(x)^+})$ on $\Spa(B,k(x)^+)$ has a unique center, which we denote by $\ti{y}$.
Since $g\p(x\pp)=x$, pushing forward the valuated valuation ring along $g\p$ gives $(x, B, k(x)^+)$ on $X$. 
Due to commutativity of the diagram, its image on $Y$ agrees with the pushforward of $(\ti{x}, \com{B}, \com{k(x)^+})$ along $g$. 
By construction, we then have $g(\ti{y})=y$. 
Note that $g$ is an adic morphism.
Since $f$ is locally of weakly finite type, we therefore may consider the fiber product $X\times_Y \Spa(B,k(x)^+)$. 
By its universal property, the preceding diagram induces a unique morphism $\gamma$ to $X\times_Y \Spa(B,k(x)^+)$, together with the two projections $\pi_1, \pi_2$, making the following diagram cartesian     
   \begin{center}\begin{tikzcd}
    \Spa(k(x),k(x)^+) \arrow[rd, dashed, "\gamma"] \arrow[rdd, bend right, "f\p"] \arrow[rrd, bend left, "g\p"] & & \\& X\times_Y\Spa(B,k(x)^+)\arrow[d, "\pi_1"] \arrow[r, "\pi_2"] & X \arrow[d, "f"] \\ & \Spa(B,k(x)^+) \arrow[r, "g"] &Y.   
   \end{tikzcd} \end{center}
Setting $x\p=\gamma(x\pp)$, we have $\pi_1(x\p)=\ti{x}$ and $\pi_2(x\p)=x$. 
Moreover, since $k(x\p)=\com{k(x)}$, we obtain the horizontal valuated valuation ring $\gamma_\ups(\com{B})=(x\p, \com{B}, \com{k(x)^+})$ on $X\times_Y \Spa(B,k(x)^+)$. 
Because $\ti{y}$ is the center of a horizontal valuated valuation ring with support $\ti{x}$, it is a horizontal specialization of $\ti{x}$. 
As $f$ is universally horizontally specializing, the base change $\pi_1$ is horizontally specializing. 
Hence, there exists a horizontal specialization $y\p$ of $x\p$ such that $\pi_1(y\p)=\ti{y}$. 
By construction, $y\p$ also is a center of the induced horizontal valuated valuation ring $(x\p, \com{B}, \com{k(x)^+})$ on $X\times_Y \Spa(B,k(x)^+)$. 
Then $\pi_2(y\p)\eqqcolon z$ is a center of $(x,B,k(x)^+)$ on $X$ satisfying $f(z)=y$, as desired.
    
Conversely, assume the existence of centers as in (2). We want to prove that $f$ is universally horizontally specializing. Consider a cartesian diagram 
    \begin{equation*}
           \begin{tikzcd}
      X\p \arrow[d, "f\p"] \arrow[r, "g\p"] & X \arrow[d, "f"] \\
     \ti{Y} \arrow[r, "g"]    & Y.         
      \end{tikzcd}
    \end{equation*}
Let $x\p$ be an arbitrary point of $X\p$ and set $x\coloneqq g\p(x\p)$. 
Consider a horizontal specialization $\ti{y}$ of $\ti{x}\coloneqq f\p(x\p)$ in $\ti{Y}$. 
With Lemma \ref{linkspecialVvrings}, applied in the horizontal case, we choose a horizontal valuated valuation ring $(\ti{x},\ti{B}, k(\ti{x})^+)$ with center $\ti{y}$ on $\ti{Y}$. 
Moreover, using Remark \ref{remarkVvrings} we choose a lift to a horizontal valuated valuation ring $(x\p,B\p,k(x\p)^+)$ on $X\p$ such that $f_\ups\p(B\p)=(\ti{x},\ti{B}, k(\ti{x})^+)$. 
Let $g_\ups\p(B\p)\eqqcolon(x,B,k(x)^+)$ and $g_\ups(\ti{B})\eqqcolon(x\pp,B\pp, k(x\pp)^+)$ be the horizontal valuated valuation rings on $X$ and $Y$, respectively.
Commutativity then yields $f_\ups(B)=(x\pp,B\pp, k(x\pp)^+)$. 
Hence, the point $g(\ti{y})\eqqcolon y$ is a horizontal center of the induced valuated valuation ring $f_\ups(B)$ on $Y$. 
By assumption, we obtain a horizontal center $z$ of $(x,B,k(x)^+)$ on $X$ such that $f(z)=y$. 
Now, we choose affinoid neighbourhoods $U\subseteq X, \ti{V}\subseteq \ti{Y}$ and $V\subseteq Y$, with $z\in U$ and $\ti{y}\in \ti{V}$, such that $f(U)\subseteq V$ and $g(\ti{V})\subseteq V$. 
Consider the affinoid fiber product $U\times_V \ti{V}$. 
Since $x\p\in f^{\prime{-1}}(\ti{V})\cap g^{\prime{-1}}(U)$, the valuated valuation ring $(x\p,B\p,k(x\p)^+)$ may be regarded as a valuated valuation ring on $U\times_V \ti{V}$. 
Note that $(x\p,B\p,k(x\p)^+)$ has a unique center $y\p$ on $U\times_V \ti{V}$ as well, which is a horizontal specialization of $x\p$. 
Consequently, $f\p(y\p)$ is a center of $(\ti{x},\ti{B}, k(\ti{x})^+)$ on $\ti{V}$. 
By uniqueness of centers on affinoid spaces, we have $f\p(y\p)=\ti{y}$. 
This shows universal horizontal specialization of $f$.
    
\textit{(2) $\implies$ (3):} 
We first prove the uniqueness statement. 
Assume that every horizontal valuated valuation ring has at most one center above a prescribed center on $Y$. 
Consider a horizontal test diagram as in (3). 
Let $x$ be the image of the closed point under $\Spa(k,k^+)\rightarrow X$. 
By Remark \ref{remarkwlogchangecomdiag}, we may exchange $\Spa(k,k^+)$ with $\Spa(k(x),k(x)^+)$ and $\Spa(B,k^+)$ with $\Spa(B\p,k(x)^+)$ where $k(x)^+\subseteq B\p \subseteq k(x)^+$.
Let $\ti{x}$ be the image of the closed point under $\Spa(k(x),k(x)^+)\rightarrow \Spa(B\p,k(x)^+)$ and let $\ti{y}$ be the unique closed point of $\Spa(B\p,k(x)^+)$ as described in Remark \ref{remarklocalhuberpair}.
Now, we consider the horizontal valuated valuation ring $(\ti{x},\com{B\p},\com{k(x)^+})$ with center $\ti{y}$ on $\Spa(B\p,k(x)^+)$.
Its pushforward on $Y$ has the image of $\ti{y}$ as a center, which we denote by $y$. 
Suppose now there are two distinct morphisms $p,q:\Spa(B\p,k(x)^+)\rightarrow X$ making the diagram commute. 
Note that we necessarily have $p(\ti{y})\neq q(\ti{y})$.
The points $p(\ti{y})$ and $q(\ti{y})$ therefore constitute two different centers of the induced horizontal valuated valuation ring $(x,B\p,k(x)^+)$ on $X$ satisfying $f(p(\ti{y}))=y=f(q(\ti{y}))$. 
This contradicts the uniqueness property of centers assumed in (2).
        
If (2) guarantees the existence of a center with the described properties, it is straightforward to construct the wanted lift using Lemma \ref{uniquecenteraffinoid}.
     
\textit{(3) $\implies$ (2):} 
Assume first that any horizontal test diagram as above admits at most one diagonal morphism.
Let $(x,B,k(x)^+)$ be a horizontal valuated valuation ring on $X$ with center $y$ on $Y$. 
This yields a horizontal test diagram as described in (3).
Suppose there are two distinct centers $z_1$ and $z_2$ of $(x,B,k(x)^+)$ on $X$ such that $f(z_1)=y=f(z_2)$. 
Choose affinoid neighbourhoods $U$ of $z_1$ and $V$ of $z_2$.
The center condition by Lemma \ref{uniquecenteraffinoid} then gives morphisms $\Spa(B,k(x)^+)\longrightarrow U$ and $\Spa(B,k(x)^+)\longrightarrow V$.
Composing with the respective inclusions into $X$ yields two diagonal morphisms   $\Spa(B,k(x)^+)\longrightarrow X$ in the horizontal test diagram. 
These two morphisms are distinct, since the unique closed point of $\Spa(B,k(x)^+)$ is mapped to $z_1$ by the first morphism and to $z_2$ by the second. 
This contradicts the uniqueness assumed in (3). 
Hence, there is at most one center of $(x,B,k(x)^+)$ on $X$ above the prescribed center $y$ on $Y$. 
      
For existence, assume that any horizontal test diagram as described has a lifting morphism.
Let $(x,B,k(x)^+)$ again be a horizontal valuated valuation ring on $X$ whose image has a center $y$ on $Y$.
Consider the horizontal test diagram determined by the support $x$ and by the center $y$ of the pushforward on $Y$.
By assumption, it admits a diagonal lift $p$.
Let $\tilde y$ be the unique closed point of $\Spa(B,k(x)^+)$. 
Then $z\coloneqq p(\tilde y)$ is a center of $(x,B,k(x)^+)$ on $X$. 
Due to commutativity of the diagram we have $f(z)=y$, so $z$ indeed serves as the required center.
    \end{proof}
    \end{theorem}
    
\section{Unified valuative criteria}\label{sectionunifiedVc}
Combining the vertical criteria of Section~\ref{sectionverticalVc} with the horizontal criteria of Section~\ref{sectionhorizontalVc}, we finally obtain unified valuative criteria valid for all adic spaces. 
For the usual notions of separatedness, the universal specialization property and (partial) properness, we refer to \cite[Definitions 1.3.1-1.3.3 and Lemma 1.3.4]{Hu1996}.

\begin{theorem}[Unified valuative criteria]\label{unifiedVc}
Let $f:X\rightarrow Y$ be a morphism of adic spaces which is locally of weakly finite type and quasi-separated (resp. locally of weakly finite type, resp. (locally) of $^+$weakly finite type and quasi-separated). 
The following are equivalent. \begin{enumerate}
    \item $f$ is separated (resp. universally specializing, resp. (partially) proper),
    \item for every affinoid field $(k,k^+)$ and all commutative diagrams 
    \begin{equation*}
       \begin{gathered}
        \begin{tikzcd}[ampersand replacement=\&]
                \Spa(k, k^+) \arrow[r] \arrow[d] \& X \arrow[d] \\
                \Spa(k, A) \arrow[r] \arrow[ur, dotted, "p"] \& Y,
        \end{tikzcd} \\
        \text{with } A \subseteq k^+,
        \end{gathered}
        \qquad \text{and} \qquad
        \begin{gathered}
        \begin{tikzcd}[ampersand replacement=\&]
                \Spa(k, k^+) \arrow[r] \arrow[d] \& X \arrow[d] \\
                \Spa(B, k^+) \arrow[r] \arrow[ur, dotted, "q"] \& Y,
        \end{tikzcd} \\
        \text{with } k^+ \subseteq B \subseteq k,
        \end{gathered}
    \end{equation*}
    there is at most one (resp. there exists a, resp. there exists exactly one) morphism $p$ resp. $q$ making the diagram commute.
\end{enumerate}

\begin{proof}
\textit{(1) $\implies$ (2):} 
Since (partial) properness is defined as separatedness together with universal specialization under the relevant finiteness hypotheses, it suffices to treat these two properties separately.
  
Suppose first that $f$ is separated. 
By \cite[Proposition 3.11.12]{Hu1993}, every valuated valuation ring on $X$ has at most one center above any given center on $Y$.
As vertical and horizontal valuated valuation rings are special cases of valuated valuation rings, this property is inherited. 
The vertical and horizontal lifting uniqueness conditions therefore follow from Proposition \ref{verticalVc} and Theorem \ref{horizontalVc}. 
  
Second, let $f$ be universally specializing. It is shown in \cite[Proposition 3.4]{solodov2025relativeverticalcompactificationweakly} that $f$ is universally vertically specializing. 
The same argument applied to horizontal specializations shows that
$f$ is also universally horizontally specializing. 
Proposition \ref{verticalVc} and Theorem \ref{horizontalVc} then yield the required lifting conditions. 
  
\textit{(2) $\implies$ (1):} 
Conversely, assume the uniqueness parts of the vertical and horizontal lifting conditions. 
We want to show that $f$ is separated.
Let $(x,A,A^+)$ be a valuated valuation ring on $X$, and let $y$ be a center of its pushforward to $Y$. 
Suppose there are two centers $z_1,z_2$ of $(x,A,A^+)$ on $X$ with $f(z_1)=y=f(z_2)$. 
Choose an open affinoid neighbourhood $V\subseteq Y$ of $y$.
Moreover, choose open affinoid neighbourhoods $U_1,U_2\subseteq X$ of $z_1,z_2$, respectively, such that $f(U_1)\subseteq V$ and $f(U_2)\subseteq V$. 
Since $z_1,z_2$ are specializations of $x$, the point $x$ and thus the valuated valuation ring $(x,A,A^+)$ lies in $U_1$ and $U_2$ with its respective center.
By Lemma \ref{generalishorizontalandvertical}, the associated vertical valuated valuation ring $(x,k(x),A^+)$ has centers $v_1$ on $U_1$ and $v_2$ on $U_2$ such that $z_i$ is a horizontal specialization of $v_i$ for each $i=1,2$. 
By Remark \ref{remarkVvrings}, the points $f(v_1)$ and $f(v_2)$ are centers of the pushforward of $(x,k(x),A^+)$ on $V$.
Since centers on affinoids are uniquely determined, we obtain $f(v_1)=f(v_2)$.
The vertical uniqueness condition then implies $v_1=v_2$, and we denote this common point by $v$. 
Next, by Lemma \ref{generalishorizontalandvertical}, the corresponding horizontal valuated valuation ring $(v,A \cap k(v), k(v)^+)$ has centers $z_1$ on $U_1$ and $z_2$ on $U_2$, respectively. 
Similarly, $f(z_1)=y=f(z_2)$ is a center of the image of $(v,A \cap k(v), k(v)^+)$ on $V$. 
Applying the horizontal uniqueness condition now yields $z_1=z_2$. 
This shows that $f$ is separated by \cite[Proposition 3.11.12]{Hu1993}.
  
Finally, assume the existence parts of both the vertical and horizontal  lifting conditions. 
By Proposition \ref{verticalVc} and Theorem \ref{horizontalVc}, this implies that $f$ is both universally vertically and universally horizontally specializing.
We now wish to prove that $f$ is universally specializing.
Let $f\p\colon X \times_Y \ti{Y}\rightarrow \ti{Y}$ be a base change of $f$.
We choose an arbitrary point $x\p$ of $X \times_Y\ti{Y}$.
Consider a specialization $\ti{y}$ of $\ti{x}\coloneqq f\p(x\p)$ in $\ti{Y}$.
By Proposition \ref{propfactorizationspecial}, it factors through a vertical specialization $\ti{z}$ of $\ti{x}$ such that $\ti{y}$ is a horizontal specialization of $\ti{z}$.
By assumption, there is a vertical specialization $z\p$ of $x\p$ with $f\p(z\p)=\ti{z}$.
In the same way, the analogous statement provides a horizontal specialization $y\p$ of $z\p$ with $f\p(y\p)=\ti{y}$. 
Thus, we found a specialization $y\p$ of $x\p$ in $X \times_Y\ti{Y}$ such that $f\p(y\p)=\ti{y}$, which verifies that $f$ is universally specializing.
 \end{proof}
 \end{theorem}

\bibliographystyle{alpha}
\bibliography{valuativecriteria/lit}
\end{document}